 \newtheorem{thm}{Theorem}[section]
 \newtheorem{cor}[thm]{Corollary}
 \newtheorem{prop}[thm]{Proposition}
 \theoremstyle{definition}
 \theoremstyle{remark}
 \numberwithin{equation}{section}
\newcommand{\Hol}{{\mathcal H}}
\newcommand{\D}{{\mathbb D}}
\newcommand{\C}{{\mathbb C}}
\newcommand{\w}{\omega}
\newcommand{\dom}{\mathcal D}
\newcommand{\ran}{\mathcal R}
\newcommand{\uP}{{\mathbb U}}
\newcommand{\R}{{\mathbb R}}
\newcommand{\Z}{{\mathbb Z}}
\newcommand{\al}{\alpha}
\newcommand{\la}{\lambda}
\newcommand{\g}{\gamma}
\newcommand{\G}{\Gamma}
\newcommand{\vj}{\varphi}
\newcommand{\s}{\sigma}
\newcommand{\z}{\zeta}
\newcommand{\Span}{\operatorname{span}}
\newcommand{\cl}{\operatorname{cl}}
\newcommand{\B}{{\mathcal B}}
\newcommand{\Aut}{\operatorname{Aut}}
\begin{document}

%-------------------------------------------------------------------------

%---------------------------------------------------------------------------

%

\title[Groups of Composition operators]
 {Weighted Composition Groups on the Little Bloch space}

%%----------Author 1
\author[S. B. Mose]{S. B. Mose}
\address{
Department of Pure and Applied Mathematics \\
Maseno University\\
P.O. Box 333 - 40105\\
Maseno\\
Kenya}

%%----------Author 2
\author[J. O. Bonyo]{J. O. Bonyo$^{1}$}
\address{
Department of Pure and Applied Mathematics \\
Maseno University\\
P.O. Box 333 - 40105\\
Maseno\\
Kenya}
\email{jobbonyo@maseno.ac.ke}

\subjclass{Primary 47B38, 47D03, 47A10}

\keywords{Weighted composition operator group, analytic functions, similar semigroups, spectrum, resolvent, adjoint group, nonreflexive Bergman space}

\dedicatory{Dedicated to Prof. Len Miller (PhD advisor to author$^1$) and Prof. Vivien Miller of Mississippi state University on their retirement}
%%% ----------------------------------------------------------------------

\begin{abstract}
We determine both the semigroup and spectral properties of a group of weighted composition operators on the Little Bloch space. It turns out that these are strongly continuous groups of invertible isometries on the Bloch space. We then obtain the norm and spectra of the infinitesimal generator as well as the resulting resolvents which are given as integral operators. As consequences, we complete the analysis of the adjoint composition group on the predual on the nonreflexive Bergman space, and a group of isometries associated with a specific automorphism of the upper half plane.
\end{abstract}

%%% ----------------------------------------------------------------------
\maketitle
%%% ----------------------------------------------------------------------
%\tableofcontents
\section{Introduction}
The (open) unit disc $\D$ of the complex plane $\C$ is defined as $\D = \{z\in \C : |z| < 1\}$, while the upper half-plane of $\C$, denoted by $\uP$, is given by $\uP=\{\w \in \C: \Im(\w)>0\}$ where $\Im(\w)$ stands for the imaginary part of $\w$. The Cayley transform $\psi(z):=\frac{i(1+z)}{1-z}$ maps the unit disc $\D$ conformally onto the upper half-plane $\uP$ with inverse $\psi^{-1}(\w)=\frac{\w-i}{\w+i}.$ For every $\al > -1$, we define a positive Borel measure $dm_{\alpha}$ on $\D$ by $dm_{\alpha}(z) = (1-|z|^{2})^{\al}dA(z)$ where $dA$ denotes the area measure on $\D.$\\
For an open subset $\Omega$ of $\C$, let $\Hol(\Omega)$ denote the Fr\'echet space of analytic functions $f:\Omega\to\C$ endowed
with the topology of uniform convergence on compact subsets of $\Omega$. Let $\Aut(\Omega) \subset \Hol(\Omega)$ denote the group of biholomorphic maps $f: \Omega \to \Omega$.  For $1 \leq p < \infty$, $\al> -1$, the weighted Bergman spaces of the unit disc $\D$, $ L_a^p(\D,m_\al)$, are defined by
\begin{equation*}
    L_a^p(\D,m_\al):=\left\{ f\in\Hol(\D): \|f\|_{L_a^p(\D,m_\al)}=\left(\int_{\D}|f(z)|^p\,dm_\al(z)\right)^{\frac{1}{p}}< \infty \right\}.
\end{equation*}
Clearly $ L_a^p(\D,m_\al) =  L^p(\D,m_\al)\cap \Hol(\D)$ where $ L^p(\D,m_\al)$ is the classical Lebesgue spaces.
For every $f\in  L_a^p(\D,m_\al)$, the growth condition is given by
\begin{equation*}
    |f(z)| \leq \frac{K\|f\|}{(1-|z|^2)^{\g}}
\end{equation*}
where $K$ is a constant and $\g = \frac{\al +2}{p}$, see for example \cite[Theorem 4.14]{Zhu}.\\
The Bloch space of the unit disc, denoted by $\B_{\infty}(\D)$, is defined as the space of analytic functions $f\in \Hol(\D)$ such that the seminorm
\begin{equation*}
    \|f\|_{\B_{\infty,1}(\D)}:= \sup_{z\in\D}\left(1-|z|^2\right)|f'(z)| < \infty.
\end{equation*}
Following \cite{Zhu2, Zhu}, $\B_{\infty}(\D)$ is a Banach space with respect to the norm $\|f\|_{\B_{\infty}(\D)}:=|f(0)| + \|f\|_{\B_{\infty,1}(\D)}$. On the other hand, the Little Bloch space of the disc, denoted by $\B_{\infty,0}(\D)$, is defined to be the closed subspace of $\B_{\infty}(\D)$ such that
\begin{equation*}
    \B_{\infty,0}(\D):= \cl_{\B_{\infty}}\C[z]
\end{equation*}
where $\cl_{\B_{\infty}}\C[z]$ denotes $\B_{\infty}$ closure of the set of analytic polynomials in $z$. Equivalently,
\begin{equation*}
    \B_{\infty,0}(\D):= \left\{ f\in \Hol(\D):\,\lim_{|z|\to 1^-,\,z\in \D}\left(1-|z|^2\right)|f'(z)|=0 \right\},
\end{equation*}
and possesses the same norm as $\B_{\infty}(\D).$ Since  $\B_{\infty,0}(\D)$ is a closed subspace of the Banach space $\B_{\infty}(\D)$, it follows that $\B_{\infty,0}(\D)$ is a Banach space as well with respect to the norm $\|\cdot\|_{\B_{\infty}(\D)}$. Note that every $f\in \B_{\infty}(\D)$ (or $f\in \B_{\infty,0}(\D)$) satisfies the growth condition
\begin{equation}\label{eq1}
    |f(z)| \leq \left(1+\tfrac1{2}\log \left(\frac{1+|z|}{1-|z|}\right) \right)\|f\|_{\B_{\infty}(\D)}.
\end{equation}
See for instance \cite{Ohno} for details.
Let $1 < p < \infty$ and $q$ be conjugate to $p$ in the sense that $\frac{1}{p} + \frac{1}{q} = 1$. If $(L_a^p(\D,m_\al))^*$ is the dual space of $L_a^p(\D,m_\al)$, then
\begin{equation}\label{Lpdual}
    (L_a^p(\D,m_\al))^* \approx L_a^q(\D, m_\al), \quad \al > -1,
\end{equation}
under the integral pairing
\begin{equation}\label{Lpdualp}
    \langle f,g\rangle=\int_\D f(z)\overline{g(z)}\,dm_\al\quad (f\in L^p_a(\D, m_\al),\ g\in L^q_a(\D, m_\al)).
\end{equation}
It is well known that for $1 < p < \infty$, $L_a^p(\D,m_\al)$ is reflexive. The case $p=1$ is the nonreflexive case and the duality relations have been determined as follows:
\begin{equation}\label{L1dual}
    (L_a^1(\D,m_\al))^* \approx \B_{\infty}(\D),
\end{equation}
and
\begin{equation}\label{L1predual}
    (\B_{\infty,0}(\D))^* \approx L_a^1(\D,m_\al)
\end{equation}
under the duality pairings given by, respectively
\begin{equation}\label{L1dualp}
    \langle f,g\rangle=\int_\D f(z)\overline{g(z)}\,dm_\al\quad (f\in L^1_a(\D, m_\al),\, g\in \B_{\infty}(\D))
\end{equation}
and
\begin{equation}\label{L1predualp}
    \langle f,g\rangle=\int_\D f(z)\overline{g(z)}\,dm_\al\quad (f\in \B_{\infty,0}(\D),\, g\in L^1_a(\D, m_\al)).
\end{equation}
In other words, the dual and predual spaces of the nonreflexive Bergman space $L^1_a(\D, m_\al)$ are the Bloch and Little Bloch spaces respectively.
For a comprehensive account of the theory of Bloch and Bergman spaces, we refer to \cite{Dur2, Gar, Pel, Zhu2, Zhu}.\\
In \cite{BBMM}, all the self analytic maps $(\vj_t)_{t\geq 0} \subseteq \Aut (\uP)$ of the upper half plane $\uP$ were identified and classified according to the location of their fixed points into three distinct classes, namely: scaling, translation and rotation groups. For each self analytic map $\vj_t$, we define a corresponding group of weighted composition operator on $\Hol(\uP)$ by
\begin{equation} S_{\vj_t} f(z)\;=\;(\vj_t'(z))^\g f(\vj_t(z)),\end{equation} for some appropriate weight $\g$.\\
 It is noted in \cite[Section 5]{BBMM} that for the rotation group, we consider the corresponding group of weighted composition operators defined on the analytic spaces of the disc $\Hol(\D)$ given by
\begin{equation}\label{eq2}
    T_tf(z) = e^{ict}f(e^{ikt}z)\quad \mbox{ with } c,k\in \mathbb{R},\; k\neq 0.
\end{equation}
The study of composition operators on spaces of analytic functions still remains an active area of research. For Bloch spaces, most studies have only focussed on the boundedness and compactness of these operators. See for instance \cite{BMM, Fu, Ohno, Shi, Wulan}.
In \cite{BBMM} and \cite{Bon}, both the semigroup and spectral properties of the group $(T_t)_{t\in \mathbb{R}}$ were studied in detail on the Hardy and Bergman spaces. The aim of this paper is to extend the analysis of the group $(T_t)_{t\in \mathbb{R}}$ from the Hardy and Bergman spaces to the setting of the Little Bloch space. Specifically, we apply the theory of semigroups as well as spectral theory of linear operators on Banach spaces to study the properties of the group of weighted composition operators given by equation \eqref{eq2} on the little Bloch space of the disk. As a consequence, we shall complete the analysis of the adjoint group on the dual of the nonreflexive Bergman space $L_a^1(\D,m_\al)$. The analysis of the adjoint group on the reflexive Bergman space, that is, $L_a^p(\D,m_\al)$ for $1 < p < \infty$, was considered exhaustively in \cite{Bon}. We shall also consider a specific automorphism of $\uP$ and carry out an analysis of the corresponding composition operator.\\
If $X$ is an arbitrary Banach space, let $\mathcal{L}(X)$ denote the algebra of bounded linear operators on $X$. For a linear operator $T$ with domain $\dom(T) \subset X$, denote the spectrum and point spectrum of $T$ by $\s(T)$ and $\s_p(T)$ respectively. The resolvent set of $T$ is $\rho(T) = \C\setminus \s(T)$ while $r(T)$ denotes its spectral radius. For a good account of the theory of spectra, see \cite{Dun, Con, Neu}.
If $X$ and $Y$ are arbitrary Banach spaces and $U\in{\mathcal L}(X,Y)$ is an invertible operator, then clearly  $(A_t)_{t\in \R}\subset {\mathcal L}(X)$ is a strongly continuous group if and only if $B_t:=UA_t U^{-1}$, $t\in\R$, is a strongly continuous group in ${\mathcal L}(Y)$. In this case, if $(A_t)_{t\in \R}$ has generator $\Gamma$, then $(B_t)_{t\in \R}$ has generator $\Delta=U\Gamma U^{-1}$ with domain
$\dom(\Delta)=U\dom(\Gamma) :=\left\{y \in Y : Uy \in \dom(\G)\right\}.$
Moreover, $\s_p(\Delta)=\s_p(\Gamma),$ and $\s(\Delta)=\s(\Gamma),$ since if $\lambda$ is in the resolvent set $\rho(\Gamma):=\C\setminus\s(\Gamma)$, we have
that $R(\lambda,\Delta)=UR(\lambda,\Gamma)U^{-1}$. See for example \cite[Chapter II]{Eng} and \cite[Chapter 3]{Neu}.

\section{Groups of Composition operators on the Little Bloch space}
We consider the group of weighted composition operators $(T_t)_{t\in \R}$ given by equation \eqref{eq2} and defined on the little Bloch space $\B_{\infty,0}(\D)$ as $T_tf(z)=e^{ict}f(e^{ikt}z)$ where $c,k\in \R$, $k\neq 0$ and $\forall \,f\in \B_{\infty,0}(\D)$. We denote the infinitesimal generator of the group $(T_t)_{t\in \R}$ by $\G_{c,k}$ and give some of its properties in the following Proposition,
\begin{prop}\label{gen1}
\begin{enumerate}
  \item $(T_t)_{t\in \R}$ is a strongly continuous group of isometries on $\B_{\infty,0}(\D)$.
  \item The infinitesimal generator $\G_{c,k}$ of $(T_t)_{t\in \R}$ on $\B_{\infty,0}(\D)$ is given by $\G_{c,k}f(z)= i\left(cf(z)+kzf'(z)\right)$ with domain $\dom(\G_{c,k}) = \left\{f\in \B_{\infty,0}(\D): zf' \in \B_{\infty,0}(\D)\right\}$.
\end{enumerate}
\end{prop}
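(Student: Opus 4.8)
The plan is to verify the group axioms by direct substitution, to read off the isometry property from the rotation-invariance of the Bloch norm, and to deduce strong continuity from the density of polynomials. First I would check $T_0=I$ and $T_{s+t}=T_sT_t$ by composing the defining formula, and confirm that each $T_t$ preserves $\B_{\infty,0}(\D)$: writing $w=e^{ikt}z$ one has $(1-|z|^2)|(T_tf)'(z)|=(1-|w|^2)|f'(w)|$, since $|e^{ict}|=|e^{ikt}|=1$ and $|w|=|z|$, so the little-oh boundary condition is preserved. The same identity, together with $|(T_tf)(0)|=|f(0)|$ and the fact that $z\mapsto e^{ikt}z$ is a bijection of $\D$, shows $\|T_tf\|_{\B_{\infty}(\D)}=\|f\|_{\B_{\infty}(\D)}$, so each $T_t$ is a surjective isometry with inverse $T_{-t}$.

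For strong continuity it suffices, since the $T_t$ are uniformly bounded isometries forming a group, to establish continuity at $t=0$, and for this I would first treat the dense set $\C[z]$ and then pass to the limit. On a monomial, $T_tz^n=e^{i(c+kn)t}z^n$, so $\|T_tz^n-z^n\|_{\B_{\infty}(\D)}=|e^{i(c+kn)t}-1|\,\|z^n\|_{\B_{\infty}(\D)}\to0$ as $t\to0$; by linearity this gives continuity on $\C[z]$. For arbitrary $f\in\B_{\infty,0}(\D)$ and a polynomial $p$ with $\|f-p\|_{\B_{\infty}(\D)}$ small, the estimate $\|T_tf-f\|_{\B_{\infty}(\D)}\le 2\|f-p\|_{\B_{\infty}(\D)}+\|T_tp-p\|_{\B_{\infty}(\D)}$ then forces $\|T_tf-f\|_{\B_{\infty}(\D)}\to0$.

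For the generator, set $Bf:=i(cf+kzf')$ with domain $\mathcal{A}:=\{f\in\B_{\infty,0}(\D):zf'\in\B_{\infty,0}(\D)\}$; the goal is to prove $\G_{c,k}=B$. For the inclusion $\dom(\G_{c,k})\subseteq\mathcal{A}$, take $f\in\dom(\G_{c,k})$. Norm convergence of the difference quotients $\tfrac1t(T_tf-f)$ forces pointwise convergence through the growth estimate \eqref{eq1}; since for each fixed $z$ one computes $\tfrac{d}{dt}\big|_{t=0}e^{ict}f(e^{ikt}z)=i(cf(z)+kzf'(z))$, it follows that $\G_{c,k}f=i(cf+kzf')$ as a holomorphic function. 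As $\G_{c,k}f\in\B_{\infty,0}(\D)$ and $cf\in\B_{\infty,0}(\D)$, we get $kzf'=\tfrac1i\G_{c,k}f-cf\in\B_{\infty,0}(\D)$, hence $zf'\in\B_{\infty,0}(\D)$ because $k\ne0$; thus $f\in\mathcal{A}$ and $\G_{c,k}=B$ on $\dom(\G_{c,k})$.

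For the reverse inclusion $\mathcal{A}\subseteq\dom(\G_{c,k})$ I would use that $\G_{c,k}$ is closed, being the generator of a strongly continuous group, together with the fact that $\G_{c,k}z^n=i(c+kn)z^n=Bz^n$, so that $\C[z]\subseteq\dom(\G_{c,k})$ with $\G_{c,k}=B$ there. It then remains to show that every $f\in\mathcal{A}$ admits polynomials $p_n$ with $p_n\to f$ and $Bp_n\to Bf$ in $\B_{\infty}(\D)$; closedness of $\G_{c,k}$ then places $f$ in $\dom(\G_{c,k})$ with $\G_{c,k}f=Bf$. This simultaneous approximation is the crux of the argument. I would obtain it from the dilations $f_r(z):=f(rz)$, using two facts: that $f_r\to f$ in the $\B_{\infty}(\D)$-norm as $r\to1^-$ (the standard norm-density property of the little Bloch space), and the commutation identity $z(f_r)'(z)=r\,zf'(rz)=(zf')_r(z)$, which shows that $zf'\in\B_{\infty,0}(\D)$ forces $z(f_r)'=(zf')_r\to zf'$ in norm as well. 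Since each $f_r$ is holomorphic on a neighbourhood of $\clD$, its Taylor partial sums converge to $f_r$ uniformly on $\clD$ together with all derivatives, hence in the norms of both $f_r$ and $z(f_r)'$; a diagonal selection over $r\to1^-$ and the polynomial degree then produces the required $p_n$. Establishing the norm convergence of the dilations and organising this diagonal estimate is where the real work lies.
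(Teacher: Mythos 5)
Your proposal is correct, and most of it runs parallel to the paper: the isometry computation via the substitution $w=e^{ikt}z$, the reduction of strong continuity to monomials by density of polynomials, and the inclusion $\dom(\G_{c,k})\subseteq\{f:zf'\in\B_{\infty,0}(\D)\}$ via pointwise identification of the difference-quotient limit through the growth estimate \eqref{eq1} are all exactly the paper's steps (you make explicit the uniform-boundedness-plus-density passage for strong continuity, which the paper leaves implicit). The genuine divergence is in the reverse inclusion, the harder half of the domain identification. The paper uses the standard semigroup averaging trick: for $f$ with $zf'\in\B_{\infty,0}(\D)$ it sets $F=i(cf+kzf')\in\B_{\infty,0}(\D)$, writes the difference quotient as $\tfrac1t\int_0^t T_sF\,ds$ by the fundamental theorem of calculus, and lets strong continuity of $(T_s)$ force norm convergence to $F$; this is short and needs nothing beyond what was already proved. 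You instead invoke closedness of the generator and show that polynomials are a core by a two-stage approximation: dilations $f_r\to f$ and $(zf')_r\to zf'$ in norm (using the characterization of $\B_{\infty,0}(\D)$ by norm convergence of dilations, plus the commutation $z(f_r)'=(zf')_r$), followed by Taylor partial sums of each $f_r$, which converge together with derivatives uniformly on $\clD$, and a diagonal selection. This costs more work — the norm convergence of dilations is a nontrivial little-Bloch-specific fact, though standard — but it buys a stronger structural conclusion, namely that $\C[z]$ is a core for $\G_{c,k}$ in the graph norm, which the paper's argument does not directly yield. Both routes are sound; yours is legitimately different and complete modulo the standard dilation fact you correctly flag as the main technical input.
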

\begin{proof}
To prove isometry, we have
\begin{eqnarray*}
% \nonumber to remove numbering (before each equation)
  \|T_tf\|_{\B_{\infty}(\D)} &=& |T_t f(0)|+ \sup_{z\in \D}\left( 1-|z|^2 \right)\left|(T_t f)'(z) \right| \\
   &=& |e^{ict}f(0)|+ \sup_{z\in \D}\left( 1-|z|^2 \right)\left|e^{ict}e^{ikt}f'(e^{ikt}z) \right|  \\
   &=& |f(0)|+ \sup_{z\in \D}\left( 1-|z|^2 \right)\left|f'(e^{ikt}z) \right|.
\end{eqnarray*}
By change of variables, let $\w=e^{ikt}z$. Then
\begin{eqnarray*}
% \nonumber to remove numbering (before each equation)
  \|T_tf\|_{\B_{\infty}(\D)} &=& |f(0)|+ \sup_{\w\in \D}\left( 1-|\w|^2 \right)\left|f'(\w) \right| \\
   &=&  \|f\|_{\B_{\infty}(\D)},\quad \mbox{ as desired.}
\end{eqnarray*}
To prove strong continuity, we shall use the density of polynomials in $\B_{\infty,0}(\D)$. Therefore it suffices to show that for $\left(z^n\right)_{n\geq 0}$; \[\lim_{t\to 0^+}\|T_t z^n - z^n\|_{\B_{\infty,0}(\D)} = 0.\]
Now, $T_tz^n - z^n = e^{ict}\left(e^{ikt}z\right)^n - z^n = \left(e^{i(c+kn)t}-1\right)z^n$. Therefore,
\begin{eqnarray*}
% \nonumber to remove numbering (before each equation)
  \lim_{t\to 0^+}\|T_tz^n - z^n\|_{\B_{\infty,0}(\D)}&=& \lim_{t\to 0^+} \left(\sup_{z\in \D}\left(1-|z|^2\right)\left| (T_t z^n - z^n)'\right|\right) \\
   &=& \lim_{t\to 0^+} \left(\sup_{z\in \D}\left(1-|z|^2\right)\left|n \left(e^{i(k+kn)t} - 1\right)z^{n-1}\right|\right) \\
   &=& 0,\quad \mbox{ as claimed.}
\end{eqnarray*}
Now, for the infinitesimal generator $\G_{c,k}$, let $f\in \dom(\G_{c,k})$ in $\B_{\infty,0}(\D)$, then the growth condition \eqref{eq1} implies that
\begin{eqnarray*}
% \nonumber to remove numbering (before each equation)
  \G_{c,k}f(z) &=& \lim_{t\to 0^+}\frac{e^{ict}f(e^{ikt}z)-f(z)}{t} = \frac{\partial}{\partial t}\left.\left(e^{ict}f(e^{ikt}z)\right)\right|_{t=0} \\
   &=& i(cf(z)-izf'(z)).
\end{eqnarray*}
Therefore $\dom(\G_{c,k}) \subseteq \left\{f\in \B_{\infty,0}(\D): zf' \in \B_{\infty,0}(\D)\right\}$.
Conversely, if $f\in \B_{\infty,0}(\D)$ is such that $zf'\in \B_{\infty,0}(\D)$, then $F(z) = i(cf(z)+kzf'(z)) \in \B_{\infty,0}(\D)$ and for all $t>0$,
 \begin{align*}
 \frac{T_{t}f(z)-f(z)}{t}&=\frac1{t}\int_{0}^{t}\frac{\partial}{\partial s}\left(T_{s}f(z)\right)\,ds\\
 &=\frac1{t}\int_{0}^{t}e^{ics}\left(i(c f(e^{iks} z)+k(e^{iks}z)f'(e^{iks}z))\right)\,ds\\
&=\frac1{t}\int_{0}^{t} T_sF(z)\,ds.
\end{align*}
Strong continuity of $(T_s)_{s\ge0}$ implies that $$\left\|\frac1{t}\int_{0}^{t} T_sF\,ds- F\right\|\leq \frac1{t}\int_{0}^{t}\left\| T_sF- F\right\|\,ds \to 0 \mbox{ as } t\to 0^+.$$
Thus,
$\dom(\Gamma_{c,k})\supseteq \left\{f\in \B_{\infty,0}(\D): zf' \in \B_{\infty,0}(\D)\right\}$.
\end{proof}

Define $M_z$, $Q$ on $\Hol(\D)$ by $M_zf(z) = zf(z)$ and $Qf(z) = \frac{f(z)-f(0)}{z}$, $\left(Qf(0)=f'(0)\right)$. More generally, $Q^mf(z) = \sum_{k=m}^{\infty} \frac{f^{(k)}(0)}{k!}z^{k-m}$, $Q^mf(0) = \frac{f^{m}(0)}{m!}$. Then $M_z^mQ^m f = \sum_{m}^{\infty} \frac{f^{(k)}(0)}{k!}z^k$ and $Q^mM_z^m f =f$. We now give the following proposition;
\begin{prop}\label{multprop}
\begin{enumerate}
  \item $M_z: \B_{\infty}(\D)\,\to\,\B_{\infty}(\D)$ is bounded
  \item $M_z\B_{\infty,0}(\D) \subseteq \B_{\infty,0}(\D)$
  \item $Q: \B_{\infty,0}(\D)\,\to\,\B_{\infty,0}(\D)$ is bounded
  \item For $m\geq 1$, $M_z^m\B_{\infty,0}(\D) = \left\{f\in \B_{\infty,0}(\D): f^k(0)=0 \,\forall\,\,k<m \right\}.$ In particular, $M_z\B_{\infty,0}(\D)$ is closed in $\B_{\infty,0}(\D)$.
\end{enumerate}
\end{prop}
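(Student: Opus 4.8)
The plan is to take the four assertions in order, using part~(1) together with the density of polynomials in $\B_{\infty,0}(\D)$ to dispose of the range statements in~(2) and~(3), and reserving the genuine analytic work for the norm estimates on $M_z$ and $Q$. For~(1) I would compute directly: since $(M_zf)(0)=0$ and $(M_zf)'(z)=f(z)+zf'(z)$, boundedness reduces to estimating $\sup_z(1-|z|^2)|f(z)+zf'(z)|$. Splitting the summands, the term $(1-|z|^2)|z||f'(z)|\le\|f\|_{\B_{\infty,1}(\D)}$ because $|z|<1$, while $(1-|z|^2)|f(z)|$ is controlled by feeding the growth bound \eqref{eq1} into the elementary fact that $\sup_{0\le r<1}(1-r^2)\bigl(1+\tfrac12\log\tfrac{1+r}{1-r}\bigr)<\infty$, the logarithmic factor being killed by $1-r^2$ as $r\to1^-$. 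This gives $\|M_zf\|_{\B_\infty(\D)}\le C\|f\|_{\B_\infty(\D)}$. For~(2), the cleanest route is density: $M_z$ carries the analytic polynomials into themselves, these lie in $\B_{\infty,0}(\D)$, and since $M_z$ is bounded by~(1) and $\B_{\infty,0}(\D)$ is closed in $\B_\infty(\D)$, continuity forces $M_z\bigl(\cl_{\B_\infty}\C[z]\bigr)\subseteq\B_{\infty,0}(\D)$.

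The boundedness of $Q$ in~(3) is the main obstacle, because the identity $z(Qf)'(z)=f'(z)-Qf(z)$ (obtained by differentiating $zQf(z)=f(z)-f(0)$) forces a division by $z$ that is harmless only away from the origin. I would therefore split $\D$ into $\{|z|\ge\tfrac12\}$ and $\{|z|<\tfrac12\}$. On the outer region I use that identity to write $(Qf)'(z)=\bigl(f'(z)-Qf(z)\bigr)/z$, bound $(1-|z|^2)|f'(z)|\le\|f\|_{\B_{\infty,1}(\D)}$, and control $(1-|z|^2)|Qf(z)|$ through the representation $Qf(z)=\int_0^1 f'(tz)\,dt$, which yields $|Qf(z)|\le\|f\|_{\B_{\infty,1}(\D)}\cdot\tfrac1{2|z|}\log\tfrac{1+|z|}{1-|z|}$, the logarithm again being absorbed by $1-|z|^2$. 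On the inner region $1-|z|^2\le1$, and a Cauchy estimate for $(Qf)'$ on $\{|z|<\tfrac12\}$ in terms of $\sup_{|\zeta|=3/4}|Qf(\zeta)|$ (itself bounded by the same integral) closes the estimate; combined with $|Qf(0)|=|f'(0)|\le\|f\|_{\B_\infty(\D)}$ this gives $\|Qf\|_{\B_\infty(\D)}\le C\|f\|_{\B_\infty(\D)}$. The inclusion $Q\B_{\infty,0}(\D)\subseteq\B_{\infty,0}(\D)$ then follows exactly as in~(2) by density, since $Q$ maps polynomials to polynomials.

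For~(4) I would argue by double inclusion using the algebraic identities recorded just before the proposition. If $f=M_z^m g$ with $g\in\B_{\infty,0}(\D)$, then iterating~(2) gives $f\in\B_{\infty,0}(\D)$, and $f(z)=z^m g(z)$ visibly satisfies $f^{(k)}(0)=0$ for $k<m$, proving $\subseteq$. Conversely, if $f\in\B_{\infty,0}(\D)$ with $f^{(k)}(0)=0$ for $k<m$, then the identity $M_z^mQ^mf=\sum_{k\ge m}\tfrac{f^{(k)}(0)}{k!}z^k$ collapses to $f$, while iterating~(3) gives $Q^mf\in\B_{\infty,0}(\D)$; hence $f=M_z^m(Q^mf)\in M_z^m\B_{\infty,0}(\D)$, giving $\supseteq$. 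Finally, $M_z\B_{\infty,0}(\D)=\{f\in\B_{\infty,0}(\D):f(0)=0\}$ is the kernel of the evaluation functional $f\mapsto f(0)$, which is continuous since $|f(0)|\le\|f\|_{\B_\infty(\D)}$; being the kernel of a bounded functional it is closed, and more generally each set $\{f\in\B_{\infty,0}(\D): f^{(k)}(0)=0,\ k<m\}$ is closed as a finite intersection of such kernels.
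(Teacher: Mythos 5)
Your proof is correct, but at the crux of the proposition --- the boundedness of $Q$ --- it takes a genuinely different route from the paper's. The paper never estimates the operator norm of $Q$ directly: its displayed inequality for $(1-|z|^2)|(Qf)'(z)|$ (which degenerates near $z=0$) is used only to show that this quantity tends to $0$ as $|z|\to 1^-$, i.e.\ that $Qf\in\B_{\infty,0}(\D)$; boundedness is then deduced \emph{a posteriori} inside the proof of (4), where $M_z:\B_{\infty,0}(\D)\to\{f\in\B_{\infty,0}(\D):f(0)=0\}$ is a bounded bijection onto a closed subspace, so the open mapping theorem gives a bounded inverse, which is essentially $Q$. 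You instead prove $\|Qf\|_{\B_{\infty}(\D)}\le C\|f\|_{\B_{\infty}(\D)}$ by hand, splitting $\D$ at $|z|=\tfrac12$, using the identity $z(Qf)'=f'-Qf$, the representation $Qf(z)=\int_0^1 f'(tz)\,dt$ with its consequence $|Qf(z)|\le\|f\|_{\B_{\infty,1}(\D)}\tfrac1{2|z|}\log\tfrac{1+|z|}{1-|z|}$, and a Cauchy estimate near the origin; you then obtain the range inclusions in (2) and (3) by density of polynomials and closedness of $\B_{\infty,0}(\D)$ in $\B_{\infty}(\D)$, rather than from the vanishing-limit characterization of $\B_{\infty,0}(\D)$ that the paper uses. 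Your route is more computational but self-contained: it avoids the open mapping theorem, removes the logical dependence of (3) on (4) present in the paper, and actually yields the stronger fact that $Q$ is bounded on all of $\B_{\infty}(\D)$. The treatments of (1) and (4) are essentially the same in both arguments. One small point: your final claim that $\{f\in\B_{\infty,0}(\D): f^{(k)}(0)=0,\ k<m\}$ is an intersection of kernels of bounded functionals needs a word of justification for $k\ge1$ (Cauchy estimates on a fixed circle $|z|=r<1$ combined with the growth bound \eqref{eq1} show $f\mapsto f^{(k)}(0)$ is continuous); for the closedness of $M_z\B_{\infty,0}(\D)$ itself, which is all the proposition asserts, your $k=0$ case already suffices.
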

\begin{proof}
If $f\in \B_{\infty}(\D)$, then for all $z\in \D$,
\begin{eqnarray*}
% \nonumber to remove numbering (before each equation)
  (1-|z|^2)|(zf)'| &=& (1-|z|^2)|zf'(z) + f(z)| \\
   &\leq& (1-|z|^2)|f'(z)| + (1-|z|^2)|f(z)| \\
   &\leq&  (1-|z|^2)|f'(z)| + (1-|z|^2)\left(1+\tfrac1{2}\log \left( \frac{1+|z|}{1-|z|}\right)\right)\|f\|_{\B_{\infty}(\D)}.
\end{eqnarray*}
Therefore assertions (1) and (2) follow. For (3), if $f\in \B_{\infty,0}(\D)$, then for $|z|<1$,
\begin{eqnarray*}
% \nonumber to remove numbering (before each equation)
  (1-|z|^2)|(Qf)'(z)| &=& (1-|z|^2)\left|\frac{zf'(z)-f(z)+f(0)}{z^2} \right| \\
   &\leq& \frac{(1-|z|^2|f'(z)|)}{|z|} \\
   && + \frac{(1-|z|^2)\left(1+\tfrac1{2}\log \left( \frac{1+|z|}{1-|z|}\right)\right)\|f\|_{\B_{\infty}(\D)}}{|z|^2}\\
   && +\frac{(1-|z|^2)\|f\|_{\B_{\infty}(\D)}}{|z|^2}\,\to\,0 \mbox{ as } |z|\to 1.
\end{eqnarray*}
Thus $Qf \in \B_{\infty,0}(\D)$. To prove (4), let $f\in \B_{\infty,0}(\D)$ and $f(0)=0.$ Then $f=M_zQf \in M_z\B_{\infty,0}(\D)$. The reverse inclusion is obvious. Therefore, the one-to-one and onto mapping $M_z: \B_{\infty,0}(\D)\,\to\,\{f\in \B_{\infty,0}(\D): f(0)=0\}$ is bounded. So the open mapping theorem implies that the inverse is bounded. It therefore follows that $Q: \Span(1)\oplus M_z\B_{\infty,0}(\D)\,\to\,\B_{\infty,0}(\D)$ is bounded.
\end{proof}

\begin{prop}\label{propresolvent}
Let $\G_{c,k}$ be the infinitesimal generator of the group $(T)_{t\in \R}$ given by \eqref{eq2} on $\B_{\infty,0}(\D)$, then
\begin{enumerate}
  \item $\G_{c,k} = ic + k \G_{0,1}$ with domain $\dom(\G_{c,k}) = \dom(\G_{0,1}) = \left\{f\in \B_{\infty,0}(\D): zf' \in \B_{\infty,0}(\D)\right\}.$
  \item $\s(\G_{c,k}) = \left\{ ic + k \s(\G_{0,1}) \right\}$, and $\s_p(\G_{c,k}) = \left\{ ic + k \s_p(\G_{0,1}) \right\}$.
 \end{enumerate}
In fact, $\la \in \rho(\G_{0,1})$ if and only if $ic + k \la \in \rho(\G_{c,k})$, and
\begin{equation}
R(ic + k\la, \G_{c,k}) = \frac1{k} R(\la,\G_{0,1}).
\end{equation}
\end{prop}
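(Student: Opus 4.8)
The plan is to derive everything from the explicit formula for the generator obtained in Proposition~\ref{gen1}, since the relation $\G_{c,k} = ic + k\G_{0,1}$ reduces all spectral questions to an affine reparametrisation of those for $\G_{0,1}$.

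First I would establish part (1). By Proposition~\ref{gen1}, for $f \in \dom(\G_{c,k})$ we have
\begin{equation*}
\G_{c,k}f(z) = i\bigl(cf(z) + kzf'(z)\bigr) = icf(z) + k\,izf'(z) = icf(z) + k\G_{0,1}f(z),
\end{equation*}
so that $\G_{c,k} = ic\,I + k\G_{0,1}$ as operators. Since the domain of each of $\G_{c,k}$ and $\G_{0,1}$ is exactly $\{f \in \B_{\infty,0}(\D) : zf' \in \B_{\infty,0}(\D)\}$ by Proposition~\ref{gen1}, the two domains coincide, and this common domain is where the operator identity holds.

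Next I would prove the resolvent identity and the equivalence of resolvent sets, from which part (2) follows. Fix $\la \in \C$ and set $\mu = ic + k\la$. On the common domain, part (1) gives the operator identity
\begin{equation*}
\mu I - \G_{c,k} = (ic + k\la)I - (ic\,I + k\G_{0,1}) = k\bigl(\la I - \G_{0,1}\bigr).
\end{equation*}
Because $k \neq 0$, the left-hand operator is boundedly invertible on $\B_{\infty,0}(\D)$ if and only if $\la I - \G_{0,1}$ is, and in that case the inverses are related by $R(\mu,\G_{c,k}) = \tfrac1{k}R(\la,\G_{0,1})$. This is precisely the asserted resolvent formula, and it shows that $\la \in \rho(\G_{0,1})$ if and only if $ic + k\la \in \rho(\G_{c,k})$.

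Part (2) then follows by taking complements and by a direct eigenvalue argument. For the full spectrum, $\mu \in \s(\G_{c,k})$ iff $\mu \notin \rho(\G_{c,k})$ iff $\la = (\mu - ic)/k \notin \rho(\G_{0,1})$ iff $\la \in \s(\G_{0,1})$; rearranging gives $\s(\G_{c,k}) = ic + k\s(\G_{0,1})$. For the point spectrum, part (1) shows that a nonzero $f$ in the common domain satisfies $\G_{0,1}f = \la f$ precisely when $\G_{c,k}f = (ic + k\la)f$, so the eigenvalues correspond under $\la \mapsto ic + k\la$, yielding $\s_p(\G_{c,k}) = ic + k\s_p(\G_{0,1})$. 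I do not expect a serious obstacle here: everything rests on the affine identity of part (1), and the only point requiring care is the operator-theoretic bookkeeping---verifying that invertibility with bounded inverse on all of $\B_{\infty,0}(\D)$ transfers correctly through multiplication by the nonzero scalar $k$ on the common domain $\dom(\G_{0,1})$.
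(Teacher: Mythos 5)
Your proof is correct. Note that the paper gives no argument of its own for this proposition---its ``proof'' is simply a citation to \cite[Lemma 4.3]{Bon}---and your direct verification is precisely the standard argument such a lemma rests on: the affine identity $\G_{c,k} = ic\,I + k\G_{0,1}$ on the common domain $\left\{f\in \B_{\infty,0}(\D): zf' \in \B_{\infty,0}(\D)\right\}$ (which Proposition \ref{gen1} supplies independently of $c$ and $k$), followed by the observation that the factorization $\mu I - \G_{c,k} = k\left(\la I - \G_{0,1}\right)$ with $k\neq 0$ transfers bounded invertibility and scales the resolvent by $\tfrac1{k}$. Your bookkeeping on domains, resolvent sets, and eigenvectors is sound, so your write-up in effect makes the paper self-contained at this point.
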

\begin{proof}
See \cite[Lemma 4.3]{Bon}.
\end{proof}
As a result of Proposition \ref{propresolvent} above and without loss of generality, we restrict our attention to the generator $\G_{0,1}$ instead of $\G_{c,k}$ as the cases $c\neq 0$ and $k\neq 1$ where $k\neq 0$ can be easily obtained from $\G_{0,1}$. Indeed, $\G_{0,1}f(z) = iz f'(z)$ with domain $\dom(\G_{0,1})=\left\{f\in \B_{\infty,0}(\D): zf' \in \B_{\infty,0}(\D)\right\}$ is the infinitesimal generator of the group $T_tf(z) = f(e^{it}z)$ which is exactly the case when $c=0$ and $k=1$ in equation \eqref{eq2}. We now give the spectral properties of the generator $\G_{0,1}$ as well as the resulting resolvents in the following theorem;
\begin{thm}\label{thrmresolvent1}
\begin{enumerate}
  \item $\s(\G_{0,1}) = \s_p(\G_{0,1}) = \{in: n\in \Z_+\}$, and for each $n\ge0$,\hfil\break $\ker (in-\Gamma_{0,1})=\mbox{span}(z^n)$.
  \item If $\la\in \rho(\G_{0,1})$, then $M_z\B_{\infty,0}(\D)$ is $R(\la,\G_{0,1})$ - invariant $\forall m\in \Z_+$, $m> \Im(\la)$. Moreover, if $h\in M_z^m\B_{\infty,0}(\D)$, then
      \begin{equation}\label{resolvent1}
        R(\la,\G_{0,1}) = iz^{-\la t}\int_0^z \w^{i\la -1}h(\w)\,d\w
         = iz^m \int_0^1 t^{m+i\la-1}\left(Q^mh\right)(tz)\,dt.
      \end{equation}
  \item For $\la \in \rho(\G_{0,1})$, the resolvent operator $R(\la,\G_{0,1})$ is compact.
  \item  $\s(R(\la,\G_{0,1})) = \s_p(R(\la,\G_{0,1})) = \left\{ w\in \C: \left|w-\tfrac1{2\Re(\la)}\right|= \tfrac1{2\Re(\la)} \right\}$. Moreover,
 \[r(R(\la,\G_{0,1})) = \|R(\la,\G_{0,1})\| = \tfrac1{|\Re(\la)|}. \]
\end{enumerate}
\end{thm}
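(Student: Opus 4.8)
The natural approach is to solve the resolvent equation $(\la-\G_{0,1})f=h$ explicitly, since the same computation yields the eigenfunctions (hence $\s_p$), exhibits the resolvent set, and produces the integral kernel. I would first treat the eigenvalue problem $\G_{0,1}f=\la f$: this is the separable ODE $izf'(z)=\la f(z)$ with general solution $cz^{-i\la}$, and the requirement that this be holomorphic on $\D$ (a fortiori lie in $\B_{\infty,0}(\D)$) forces $-i\la\in\Z_+$, so $\la=in$ and $\ker(in-\G_{0,1})=\Span(z^n)$. This gives $\s_p(\G_{0,1})=\{in:n\in\Z_+\}$. Since by Proposition \ref{gen1} the group $(T_t)_{t\in\R}$ consists of isometries, standard semigroup theory (cf.\ \cite{Eng}) gives $\s(\G_{0,1})\subseteq i\R$; once the resolvent is constructed for every $\la\notin\{in\}$ in part (2), these points lie in $\rho(\G_{0,1})$, and combining the two inclusions yields $\s(\G_{0,1})=\s_p(\G_{0,1})=\{in:n\in\Z_+\}$.

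For the resolvent formula I would solve $\la f-izf'=h$ by the integrating factor $z^{i\la}$, obtaining $f(z)=iz^{-i\la}\int_0^z w^{i\la-1}h(w)\,dw$; the change of variable $w=tz$ combined with the factorization $h=M_z^m(Q^mh)$ from the paragraph preceding Proposition \ref{multprop} recasts this as the second expression in \eqref{resolvent1}, namely $iz^m\int_0^1 t^{m+i\la-1}(Q^mh)(tz)\,dt$. The one genuinely quantitative point is convergence at the origin: since $|t^{m+i\la-1}|=t^{\,m-\Im(\la)-1}$, the integral converges exactly when $m>\Im(\la)$, which is the stated hypothesis. The prefactor $z^m$ shows that $f$ vanishes to order $m$ at $0$, giving the $R(\la,\G_{0,1})$-invariance of $M_z^m\B_{\infty,0}(\D)$, and boundedness in the little Bloch norm follows from Proposition \ref{multprop}. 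To see that every $\la\notin\{in:n\in\Z_+\}$ lies in $\rho(\G_{0,1})$ I would split a general $h\in\B_{\infty,0}(\D)$ into its Taylor polynomial of degree $<m$, on which $\la-\G_{0,1}$ acts as the invertible scalars $\la-in$ with $n<m$, plus a remainder in $M_z^m\B_{\infty,0}(\D)$ handled by the integral formula.

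For compactness in (3) I would use the characterization that a bounded subset of $\B_{\infty,0}(\D)$ is relatively compact precisely when $(1-|z|^2)|g'(z)|\to0$ as $|z|\to1$ uniformly over the set. Differentiating the resolvent identity $(\la-\G_{0,1})R(\la,\G_{0,1})f=f$ gives $(R(\la,\G_{0,1})f)'(z)=(\la R(\la,\G_{0,1})f(z)-f(z))/(iz)$, and since $\la R(\la,\G_{0,1})f(z)-f(z)$ vanishes at $z=0$ there is no singularity there; the growth estimate \eqref{eq1} applied to both $f$ and $R(\la,\G_{0,1})f$ then forces $(1-|z|^2)|(R(\la,\G_{0,1})f)'(z)|\le C(1-|z|^2)\log\tfrac{1}{1-|z|}\,\|f\|\to0$ uniformly on the unit ball, so $R(\la,\G_{0,1})$ is compact. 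For (4) I would invoke the spectral mapping theorem for resolvents, $\s(R(\la,\G_{0,1}))\setminus\{0\}=\{(\la-in)^{-1}:n\in\Z_+\}$; each such point $w$ satisfies $\Re(1/w)=\Re(\la)$, which is exactly the equation of the circle $\left|w-\tfrac{1}{2\Re(\la)}\right|=\tfrac{1}{2\Re(\la)}$, and compactness places $0$ in the spectrum.

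Finally, for the norm and spectral radius, the upper bound is immediate from the isometric integral representation $R(\la,\G_{0,1})=\int_0^\infty e^{-\la t}T_t\,dt$ (valid for $\Re(\la)>0$, with the analogous backward formula for $\Re(\la)<0$), which yields $\|R(\la,\G_{0,1})\|\le\int_0^\infty e^{-\Re(\la)t}\,dt=\tfrac{1}{|\Re(\la)|}$. The matching lower bound --- equivalently, that the spectral radius equals the full diameter $\tfrac{1}{|\Re(\la)|}$ of the circle --- is the step I expect to be the main obstacle: one must produce test functions whose images under $R(\la,\G_{0,1})$ nearly realize this diameter, and since the eigenvalues $(\la-in)^{-1}$ only attain modulus $|\la-in|^{-1}$, this forces careful attention to the position of $\Im(\la)$ relative to the integer lattice and to possible phase cancellation in the integral representation. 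Controlling this interplay in the little Bloch norm, rather than in a Hilbert-space setting where norm and spectral radius of a self-adjoint-type operator coincide automatically, is where the real work lies.
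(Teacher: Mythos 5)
Your arguments for parts (1) and (2) are essentially the paper's own: the same eigenvalue ODE giving $cz^{-i\la}$, the same use of the isometry property and the spectral mapping theorem to force $\s(\G_{0,1})\subseteq i\R$, the same integrating factor $z^{i\la}$, and the same splitting of $\B_{\infty,0}(\D)$ into polynomials of degree less than $m$ (on which $\la-\G_{0,1}$ acts diagonally) plus $M_z^m\B_{\infty,0}(\D)$.

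Part (3) is where you genuinely diverge from the paper. The paper restricts to $M_z^m\B_{\infty,0}(\D)$, writes the resolvent there as the norm-convergent integral $iM_z^m\int_0^1 t^{m+i\la-1}H_tQ^m\,dt$, truncates the integral at $r<1$, and shows each truncation factors compactly through the disc algebra ${\mathcal A}(s\D)$ via Arzela--Ascoli; compactness then passes to the norm limit. You instead apply a sequential compactness criterion in $\B_{\infty,0}(\D)$ (a bounded set with uniform decay of $(1-|z|^2)|g'(z)|$ as $|z|\to1^-$ is relatively compact) directly to the image of the unit ball, using the identity $(R(\la,\G_{0,1})f)'(z)=\bigl(\la R(\la,\G_{0,1})f(z)-f(z)\bigr)/(iz)$ --- whose numerator vanishes at $z=0$ since $\la R(\la,\G_{0,1})f(0)=f(0)$ --- together with the growth estimate \eqref{eq1} and boundedness of the resolvent. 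This is correct and noticeably shorter: it needs neither the decomposition \eqref{blochdecomp} nor the norm-approximation step, and it treats all of $\B_{\infty,0}(\D)$ at once. Its one external ingredient is the compactness criterion itself, which the paper never states or proves; you should cite it or include the short normal-families argument that justifies it.

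On part (4), your self-diagnosis is accurate, but the ``main obstacle'' you identify is not merely hard --- it is insurmountable, because the assertion is false as stated, and the paper's own proof is erroneous at exactly the point you flagged. Since $R(\la,\G_{0,1})$ is compact (part (3)) and $\B_{\infty,0}(\D)$ is infinite dimensional, its spectrum is countable; by the spectral mapping theorem for resolvents it equals $\{(\la-in)^{-1}:n\in\Z_+\}\cup\{0\}$, which is a proper countable subset of the circle $\left|w-\tfrac1{2\Re(\la)}\right|=\tfrac1{2|\Re(\la)|}$, not the whole circle. The paper's proof simply writes this countable set as being equal to the circle and then reads off the spectral radius as the diameter $1/|\Re(\la)|$. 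In truth $r(R(\la,\G_{0,1}))=1/\dist\left(\la,\{in:n\in\Z_+\}\right)$, which equals $1/|\Re(\la)|$ only when $\Im(\la)\in\Z_+$; for instance $\la=1+\tfrac{i}{2}$ gives spectral radius $2/\sqrt{5}<1$. So what your proposal actually establishes --- the spectrum is the set of eigenvalues $(\la-in)^{-1}$ together with $0$, all lying on the stated circle, with $\|R(\la,\G_{0,1})\|\le 1/|\Re(\la)|$ from the Laplace-transform representation --- is the correct version of (4); the chain of equalities $r(R(\la,\G_{0,1}))=\|R(\la,\G_{0,1})\|=1/|\Re(\la)|$ should be abandoned rather than pursued.
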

\begin{proof}
Since each $T_t$ is an invertible isometry, its spectrum satisfies $\s(T_{t})\subseteq \partial \D$, and the spectral mapping theorem for strongly continuous groups (see for example \cite[Theorem V.2.5]{Eng} or \cite{Paz}) implies that $e^{t\s(\Gamma_{0,1})}\subseteq \s(T_{t}).$ Thus, $e^{t\s(\Gamma_{0,1})}\subseteq \partial \D\;\Rightarrow\;|e^{t\s(\Gamma_{0,1})}|=1\;\Rightarrow\;e^{t\Re(\w)}=1\;\Rightarrow\;\Re(\w)=0$ for $\w \in \s(\G_{0,1})$. It immediately follows that $\s(\Gamma_{0,1})\subseteq i\R$. \\
We now solve the resolvent equation: If $\la \in \C$ and $h\in \Hol(\D)$, $(\la - \G)f=h$. This is equivalent to
\[f'(z) + \frac{i\la}{z}f(z)=\frac{i}{z}h(z),\quad (z\neq 0)\] or
\[\left(z^{i\la}f(z) = iz^{i\la-1}h(z),\quad (z\in \D\setminus (-1,0].\right)\]
In particular, $(\la -\G)f=0$ if and only $f(z)=Kz^{-i\la}$, where $K$ is a constant. Since $z^{-i\la} \in \Hol(\D)$ if and only if $-i\la \in \Z_+$, it follows that
\[\s_p(\G_{0,1}) = \left\{in: n\in \Z_+\right\}\] with $\ker(in-\G_{0,1}) = \Span(z^n)$. Moreover, if $n\in \Z_+$ and $\la \in \s_p(\G_{0,1})$, then
\[(\la - \G)f = z^n\] has a unique solution
\[f(z) = \frac{1}{\la-in} z^n.\]
Notice that for $\la \notin \s_{p}(\G_{0,1})$ and $f\in \dom(\G_{0,1})$, $(\la-\G)f(0)=\la f(0)$. More generally, if $f(z)=z^ng(z)$ with $g(0)\neq 0$, then
\begin{eqnarray*}
% \nonumber to remove numbering (before each equation)
  (\la -\G)f &=& \la f -z(z^mg)' \\
   &=& z^m\left(\la g -mz^m g - z^{m+1}g' \right).
\end{eqnarray*}
Note that the functions $(\la-\G)f$ and $f$ have the same order of zero at 0. Thus $M_z^m\B_{\infty,0}(\D)$ is invariant under $\la-\G_{0,1}$.\\
Fix $\la\in \C\setminus \s_p(\G_{0,1})$ and let $m>\Im(\la)$. If $h=z^mg$ with $g\in \B_{\infty,0}(\D)$, then
\[i\int_0^z \w^{i\la-1}h(\w)\,d\w = iz^{m+i\la}\int_0^1 t^{m+i\la-1}g(tz)\,dt.\]
Thus $(\la-\G)h$ has a unique solution
\[f(z)=iz^m \int_0^1 t^{m+i\la-1}(Q^mh)(tz)\,dt.\]
If $u\in \B_{\infty}(\D)$ and $0\leq t < 1,$ then
\begin{eqnarray}\label{eq2.1}
% \nonumber to remove numbering (before each equation)
\nonumber  \|u(tz)\|_{\B_{\infty}(\D)} &=& \sup_{|z|<1}\left(1-|z|^2\right)t|u'(tz)| \\
\nonumber   &\leq& \sup_{|z|<1}\left(1-t^2|z|^2\right)|u'(tz)| \\
   &\leq&  \|u\|_{\B_{\infty}(\D)}.
\end{eqnarray}
Thus $\|f\| \leq \frac{1}{m-\Im(\la)} \|M_z^m\|\|Q^m\|\|h\|.$ Now, $\forall\, m\geq 1,$
\begin{equation}\label{blochdecomp}
\B_{\infty,0}(\D)=\Span(z^n)\oplus M_z^m\B_{\infty,0}(\D)
\end{equation}
and \[\left.R(\la,\G_{0,1})\right|_{\Span(z^n)_{0\leq n < m}} =
  \begin{pmatrix}
\frac{1}{\la}&&&&\\
&\frac{1}{\la-i}&&\mbox{\Huge 0}&\\
&&\ddots&&\\
&&&&\\
&\mbox{\Huge 0}&&&\\
&&&&\frac{1}{\la-(m-1)i}
\end{pmatrix}
.\] Thus $\la\notin \s_p(\G_{0,1})$ implying that $R(\la,\G_{0,1})$ is bounded on $\B_{\infty,0}(\D)$. Therefore $\s(\G_{0,1})=\s_p(\G_{0,1})$. This proves (1) and (2).\\
To prove the compactness of the resolvent operator, we argue as in \cite[Theorem 5.2]{BBMM}.
Fix $\lambda\in \rho(\Gamma_{0,1})$ and let $m\in \Z_+$ be such that $\Im(\lambda)< m$. Then by equation \eqref{blochdecomp}, it suffices to show that $R_m(\lambda,\Gamma_{0,1})=\left.R(\lambda,\Gamma_{0,1})\right|_{M_z^m\B_{\infty,0}(\D)}$ is compact.

Let ${\mathcal A}(r\D)$, $r>0$, be the disc algebra ${\mathcal A}(r\D)=C(r\overline{\D})\cap\Hol(r\D)$, equipped with the supremum norm, and for each $t$, $0\le t<1$, and $f\in\Hol(\D)$, let $H_tf(z) = f_t(z) = f(tz)$. Then by equation \eqref{eq2.1}, for every $t\in [0,1)$, $H_t$ is a contraction on $\B_{\infty,0}(\D)$.

Now, by equation \eqref{resolvent1}, $R_m(\lambda,\Gamma_{0,1})= i M_z^m\int_0^1 t^{m +i\la -1} H_tQ^m\,dt$ with convergence in norm.
Define $C_r = i M_z^m\int_0^r t^{m +i\la -1} H_tQ^m\,dt$ on $M^m_z\B_{\infty,0}(\D)$ for $0<r<1$. Then $$\|R_m-C_r\|\le \int_r^1 t^{m -\Im(\lambda) -1} \|Q\|^m\,dt = \frac{\|Q\|^m}{m-\Im(\lambda)}(1-r^{m-\Im(\lambda)})\to 0$$ as $r\to1^-$.
Choosing $s$ so that $1<s<r^{-1}$, we have that $C_r: M^m_z\B_{\infty,0}(\D)\to M^m_z\B_{\infty,0}(\D)$ factors through ${\mathcal A}(s\D)$. If $\mathbb{B}$ denotes the closed unit ball of $M^m_z\B_{\infty,0}(\D)$, let $h=Q^mf$ $(f\in M^m_z\B_{\infty,0}(\D))$. Then $\forall t$, $0\leq t\leq r$, the growth condition \eqref{eq1} implies that for $|z|\leq s$,
\begin{equation*}
    |h(tz)| \leq \left(1+ \frac{1}{2}\log\left(\frac{1+rs}{1-rs}\right)\right)\|h\|_{\B_{\infty,0}(\D)}
\end{equation*}
and
\begin{equation*}
    \left| \frac{d}{dt}h(tz)\right| \leq \frac{\|h\|_{\B_{\infty,0}(\D)}}{1-rs}.
\end{equation*}
Let $K=\left(1+ \frac{1}{2}\log\left(\frac{1+rs}{1-rs}\right)\right)\|h\|_{\B_{\infty,0}(\D)}$. Thus for $|z|\leq s$,
\begin{equation*}
\left|C_rf(z)\right| \le K\frac{s^mr^{m-\Im(\lambda)}}{m-\Im(\lambda)},
\end{equation*} and
\begin{equation*}
\left|\frac{d}{dz}C_rf(z)\right| \le K\frac{ms^{m-1}r^{m-\Im(\lambda)}}{m-\Im(\lambda)} + \frac{s^{m}r^{m-\Im(\lambda)}}{m-\Im(\lambda)}\frac{\|h\|_{\B_{\infty,0}(\D)}}{1-rs}.
\end{equation*}
 Thus by Arzela-Ascoli, $C_r\mathbb{B}$ is pre-compact in ${\mathcal A}(s\D)$ which further implies that $C_r\mathbb{B}$ is pre-compact in $\B_{\infty,0}(\D)$ by the continuous embeddedness of ${\mathcal A}(s\D)$ in $\B_{\infty,0}(\D)$. Therefore each $C_r$ is compact in ${\mathcal L}(M^m_z\B_{\infty,0}(\D))$ and as a result, $R_m(\lambda,\Gamma_{0,1})=(\mbox{norm})\lim_{r\to1^-}C_r$ is compact as well.\\
 The spectral mapping theorem for resolvents as well as assertion (1) above implies that
 \begin{align*}\s(R(\la,\G_{0,1}))=\s_p(R(\la,\G_{0,1}))&= \{\tfrac1{\la-im}: m\in \Z_+\}\cup \{0\}\\
 &= \left\{\w\in \C:|\w-\tfrac1{2\Re(\la)}|=\tfrac1{2|\Re(\la)|}\right\}.\end{align*}
Clearly the spectral radius $r(R(\la,\G_{0,1}))=\frac1{|\Re(\la)|}$ and therefore by the Hille-Yosida theorem, it follows that $\frac1{|\Re(\la)|} = r(R(\la,\G_{0,1})) \le \|R(\la,\G_{0,1})\|\le \frac1{|\Re(\la)|}$, as desired.
\end{proof}
As a consequence, the properties of the general group $T_t$ given by equation \eqref{eq2} is the following
\begin{cor}\label{cor}
\begin{enumerate}
  \item $\s(\G_{c,k}) = \s_p(\G_{c,k}) = \{i(c+kn): n\in \Z_+\}$, and for each $n\ge0$, $\ker (i(c+kn)-\Gamma_{c,k})=\mbox{span}(z^n)$.
  \item If $\mu\in \rho(\G_{c,k})$, then $M_z\B_{\infty,0}(\D)$ is $R(\mu,\G_{c,k})$ -invariant $\forall m\in \Z_+$, $m> \Im\left(\frac{\mu-ic}{k}\right)$. Moreover, if $h\in M_z^m\B_{\infty,0}(\D)$, then
      \begin{eqnarray}\label{resolvent2}
 \nonumber  R(\mu,\G_{c,k}) &=& \frac{i}{k}z^{-(\frac{\mu-ic}{k}) t}\int_0^z \w^{i(\frac{\mu-ic}{k}) -1}h(\w)\,d\w\\
         &=& \frac{i}{k}z^m \int_0^1 t^{m+i(\frac{\mu-ic}{k})-1}\left(Q^mh\right)(tz)\,dt.
      \end{eqnarray}
  \item For $\mu \in \rho(\G_{c,k})$, the resolvent $R(\mu,\G_{c,k})$ is compact.
  \item $\s(R(\mu,\G_{c,k})) = \s_p(R(\mu,\G_{c,k})) = \left\{ w\in \C: \left|w-\tfrac1{2\Re(\mu)}\right|= \tfrac1{2\Re(\mu)} \right\}$.
  \item $r(R(\mu,\G_{c,k})) = \|R(\mu,\G_{c,k})\| = \tfrac1{2|\Re(\mu)|}. $
\end{enumerate}
\end{cor}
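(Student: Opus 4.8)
The plan is to obtain every assertion as the image, under the affine reduction of Proposition~\ref{propresolvent}, of the corresponding assertion of Theorem~\ref{thrmresolvent1}. Because $\G_{c,k}=ic+k\G_{0,1}$ on the common domain $\dom(\G_{0,1})$, I would fix once and for all the substitution $\mu=ic+k\la$, i.e. $\la=\frac{\mu-ic}{k}$, and then transport each conclusion for $\G_{0,1}$ to $\G_{c,k}$. The two elementary facts I would record at the outset and use repeatedly are that $k\in\R\setminus\{0\}$ and $c\in\R$, so that the purely imaginary translation $ic$ leaves real parts untouched: $\Re(\la)=\Re(\mu)/k$, while $\Im(\la)=\frac{\Im(\mu)-c}{k}$.

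For (1) I would feed $\s(\G_{0,1})=\s_p(\G_{0,1})=\{in:n\in\Z_+\}$ from Theorem~\ref{thrmresolvent1}(1) into the relations $\s(\G_{c,k})=ic+k\,\s(\G_{0,1})$ and $\s_p(\G_{c,k})=ic+k\,\s_p(\G_{0,1})$ of Proposition~\ref{propresolvent}, which returns $\{i(c+kn):n\in\Z_+\}$ directly; the eigenspaces are then handled by the identity $i(c+kn)-\G_{c,k}=k(in-\G_{0,1})$, whose kernel is unchanged, giving $\ker(i(c+kn)-\G_{c,k})=\Span(z^n)$. For (2) and (3) the single tool is the resolvent identity $R(\mu,\G_{c,k})=\frac1{k}R(\la,\G_{0,1})$: the invariance of $M_z^m\B_{\infty,0}(\D)$ and the integral formula \eqref{resolvent1} carry over verbatim after multiplying by $\frac1{k}$ and substituting $\la=\frac{\mu-ic}{k}$, which is exactly \eqref{resolvent2}, and the admissibility condition $m>\Im(\la)$ becomes $m>\Im\bigl(\frac{\mu-ic}{k}\bigr)$; compactness in (3) is immediate, since a nonzero scalar multiple of the compact operator $R(\la,\G_{0,1})$ is again compact.

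For (4) and (5) I would use that multiplication by the real scalar $\frac1{k}$ dilates the spectrum by $\frac1{k}$ and scales both the norm and the spectral radius by $\frac1{|k|}$. Applying this to the circle of Theorem~\ref{thrmresolvent1}(4) and substituting $\Re(\la)=\Re(\mu)/k$, the centre $\frac1{2\Re(\la)}$ and radius $\frac1{2|\Re(\la)|}$ transform into $\frac1{2\Re(\mu)}$ and $\frac1{2|\Re(\mu)|}$ respectively, and the spectral radius, being the centre-modulus plus the radius, comes out to $r(R(\mu,\G_{c,k}))=\|R(\mu,\G_{c,k})\|=\frac1{|k|}\cdot\frac1{|\Re(\la)|}=\frac1{|\Re(\mu)|}$. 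I anticipate no genuine difficulty in this corollary, since it is a pure transport of Theorem~\ref{thrmresolvent1}; the only point demanding care is the bookkeeping of the real scalar $k$ — in particular its sign when dilating the spectral circle of part (4) — together with the verification that the imaginary shift $ic$ drops out of every real-part computation, so that the parameter $\Re(\mu)$ appears precisely as stated.
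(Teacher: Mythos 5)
Your proposal is correct and takes exactly the paper's route: the paper's own proof of this corollary is precisely the one-line transport through Proposition \ref{propresolvent} ($\mu\in\rho(\G_{c,k})$ if and only if $\tfrac{\mu-ic}{k}\in\rho(\G_{0,1})$, with $R(\mu,\G_{c,k})=\tfrac1{k}R(\tfrac{\mu-ic}{k},\G_{0,1})$) applied to Theorem \ref{thrmresolvent1}, with all details omitted, and your write-up simply supplies those details. One remark: your value $\tfrac1{|\Re(\mu)|}$ in part (5) disagrees with the stated $\tfrac1{2|\Re(\mu)|}$, but yours is the correct one --- the spectral radius of an operator whose spectrum is the circle with centre $\tfrac1{2\Re(\mu)}$ and radius $\tfrac1{2|\Re(\mu)|}$ is centre-modulus plus radius, i.e.\ $\tfrac1{|\Re(\mu)|}$, which is also what the transport of Theorem \ref{thrmresolvent1}(4) gives and what the paper itself records in the analogous theorem of Section 3; the extra factor $2$ in the printed statement of item (5) is a typo in the paper, not a gap in your argument.
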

\begin{proof}
Following proposition \ref{propresolvent}, $\mu\in \rho(\G_{c,k})$ if and only if $\frac{\mu-ic}{k}\in \rho(\G_{0,1})$. The proof now follows at once from Theorem \ref{thrmresolvent1}. We omit the details.
\end{proof}

\section{Adjoint of the Composition group on the predual of nonreflexive Bergman space $L_a^1(\D,m_\al)$}
In studying the adjoint properties of the rotation group isometries given by equation \eqref{eq2} on Bergman spaces $L_a^p(\D,m_{\al})$, $1\le p < \infty$, the second author in \cite{Bon} considered the reflexive case, that is when $1<p<\infty$. This was an extension of the investigation of adjoint properties of the Ces\'aro operator in \cite{Sisk} on Hardy spaces, and later generalized to Bergman spaces in \cite{BBMM}.  For the nonreflexive Bergman space $L^1_a(\D,m_\al)$ (that is, $p=1$), the analysis of the adjoint of rotation group isometries remains open and forms the basis of this section. Specifically, we complete the analysis of the adjoint group of the group of isometries $T_tf(z) = e^{ict}f(e^{ikt}z)$ where $c,k\in \mathbb{R}$ with $k\neq 0$ and $\forall \, f\in L_a^1(\D,m_{\al})$.\\
Recall from section 1 the duality relation $(\B_{\infty,0}(\D))^* \approx L_a^1(\D,m_\al)$ under the integral pairing $\langle g,f\rangle=\int_\D g(z)\overline{f(z)}\,dm_\al\quad (g\in \B_{\infty,0}(\D),\, g\in L^1_a(\D, m_\al)).$
In particular, the predual of $L_a^1(\D,m_{\al})$ is the Little Bloch space $\B_{\infty,0}(\D)$.  
Thus, using this duality pairing, for every $g\in \B_{\infty,0}(\D)$, we have
\begin{align*}
% \nonumber to remove numbering (before each equation)
  \langle g, T_tf\rangle &= \int_{\D} g(z)\overline{e^{ic t}f(e^{ikt}z)}\,dm_{\al}(z) \\
   &= \int_{\D} e^{-ic t}g(z)\overline{f(e^{ikt}z)}(1-|z|^2)^{\al}\,dA(z).
\end{align*}
By a change of variables argument: Let $\w=e^{ikt}z$ so that $z=e^{-ikt}\w$ and
\begin{align*}
% \nonumber to remove numbering (before each equation)
  \langle g, T_tf\rangle &= \int_{\D} e^{-ic t}g(e^{-ikt}\w)\overline{f(\w)}(1-|e^{-ikt}\w|^2)^{\al}\,dA(\w) \\
   &= \int_{\D} e^{-ic t}g(e^{-ikt}\w)\overline{f(\w)}\,dm_{\al}(\w)\\
   &= \int_{\D} T_{-t}g(\w)\overline{f(\w)}\,dm_{\al}(\w) = \langle T_tg,f\rangle,
\end{align*}
where $T_{-t}g(\w) = e^{-ic t}g(e^{-ikt}\w)$ for all $g\in \B_{\infty,0}(\D)$. Thus, the adjoint group $T_t^*$ of $T_t$ for $t\in \R$ is therefore given by
\begin{equation}\label{predualgrp}
    T^*_tg(\w):= T_{-t}g(\w) = e^{-ic t}g(e^{-ikt}\w),\quad \mbox{ for all } g\in \B_{\infty,0}(\D).
\end{equation}
%Clearly $(S_t)_{t\in \R}$ is the adjoint group corresponding to the group $(T_t)_{t\in \R}$ on $L_a^1(\D,m_{\al})$. 
Let $\G$ denotes the infinitesimal generator of the adjoint group $T_t^*$. Using the results of Section 2, we easily obtain the properties of the group $(T^*_t)_{t\in \R}$ as we give in the following theorem;
\begin{thm}
Let $(T^*_t)_{t\in\R} \subseteq \mathcal{L}(\B_{\infty,0}(\D))$ be the adjoint group of the group of weighted composition operators $(T_t)_{t\in \R} \subseteq \mathcal{L}(L_a^1(\D,m_\al))$ given by \eqref{predualgrp}. Then the following hold:
\begin{enumerate}
  \item $(T^*_t)_{t\in\R}$ is strongly continuous group of isometries on $\B_{\infty,0}(\D)$.
  \item The infinitesimal generator $\G$ of $(T^*_t)_{t\ge 0}$ is given by $\G g(\w) = -i\left(c g(\w)+k\w g'(\w)\right)$ with domain $\dom(\G) = \left\{g\in \B_{\infty,0}(\D): \w g'\in \B_{\infty,0}(\D)\right\}$.
  \item $\s(\G) = \s_p(\G)=\left\{-i(c+kn): n\in \Z_+\right\}$, and for each $n\ge 0$, $\ker\left(-i(c+kn)-\G\right) = \Span(\w^n)$
  \item If $\mu\in \rho(\G)$, then $M_\w\B_{\infty,0}(\D)$ is $R(\mu,\G)$ -invariant $\forall m\in \Z_+$, $m> \Im\left(\frac{-\mu-ic}{k}\right)$. Moreover, if $h\in M_\w^m\B_{\infty,0}(\D)$, then
      \begin{eqnarray*}
  R(\mu,\G) &=& -\frac{i}{k}\w^{\left(\frac{\mu+ic}{k}\right) t}\int_0^\w \z^{-i\left(\frac{\mu+ic}{k}\right) -1}h(z)\,dz\\
         &=& -\frac{i}{k}\w^m \int_0^1 t^{m-i\left(\frac{\mu+ic}{k}\right)-1}\left(Q^mh\right)(t\w)\,dt.
      \end{eqnarray*}
  \item $\s(R(\mu,\G)) = \s_p(R(\mu,\G)) = \left\{ w\in \C: \left|w-\tfrac1{2\Re(\mu)}\right|= \tfrac1{2\Re(\mu)} \right\}$.
  \item $r(R(\mu,\G)) = \|R(\mu,\G)\| = \tfrac1{|\Re(\mu)|}. $
\end{enumerate}
\end{thm}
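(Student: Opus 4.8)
The plan is to observe that the computation culminating in \eqref{predualgrp} identifies $(T^*_t)_{t\in\R}$ not as a new object but as an instance of the very group studied in Section 2. Indeed, $T^*_t g(\w) = e^{-ict}g(e^{-ikt}\w)$ is exactly the weighted composition group of equation \eqref{eq2} with the parameters $c$ and $k$ replaced by $-c$ and $-k$ respectively; equivalently, $T^*_t = T_{-t}$ as a concrete operator on $\B_{\infty,0}(\D)$. Since $-k\neq 0$ whenever $k\neq 0$, every structural result established for the group \eqref{eq2} on $\B_{\infty,0}(\D)$ applies, and the entire theorem reduces to transporting Proposition \ref{gen1}, Theorem \ref{thrmresolvent1}, and Corollary \ref{cor} through the substitution $(c,k)\mapsto(-c,-k)$.

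Concretely, I would argue as follows. For (1), Proposition \ref{gen1}(1) already gives that the group \eqref{eq2} is a strongly continuous group of isometries on $\B_{\infty,0}(\D)$ for arbitrary real $c$ and nonzero real $k$; applying it with $(-c,-k)$ yields (1) at once. For (2), the generator of $(T_{-t})_{t\in\R}$ is $-\G_{c,k}$, and since $\G_{c,k}f = i(cf + kzf')$ by Proposition \ref{gen1}(2), one obtains $\G g(\w) = -i\bigl(c g(\w) + k\w g'(\w)\bigr)$; the domain $\{g\in\B_{\infty,0}(\D): \w g'\in\B_{\infty,0}(\D)\}$ is unaffected by the sign flip because it constrains only $\w g'$. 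Equivalently, $\G = \G_{-c,-k}$ in the notation of Section 2.

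For (3)--(6) I would feed the generator $\G=\G_{-c,-k}$ into Corollary \ref{cor}. Assertion (3) is Corollary \ref{cor}(1) under the substitution: $\{i(c+kn)\}$ becomes $\{-i(c+kn)\}$, and $\Span(z^n)$ is carried to $\Span(\w^n)$. For (4) the only care needed is in tracking signs through \eqref{resolvent2}: the prefactor $\tfrac{i}{k}$ becomes $-\tfrac{i}{k}$ and the exponent parameter $\tfrac{\mu-ic}{k}$ becomes $-\tfrac{\mu+ic}{k}$, which reproduces both integral forms displayed in (4) together with the threshold $m>\Im\bigl(\tfrac{-\mu-ic}{k}\bigr)$. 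Finally, (5) and (6) transcribe Corollary \ref{cor}(4)--(5) verbatim, since the spectrum, spectral radius, and norm of the resolvent depend on the resolvent parameter only through $\Re(\mu)$, which is untouched by the parameter substitution.

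The only point genuinely requiring attention—rather than a true obstacle—is the justification that $(T^*_t)$, introduced via the duality pairing as a pre-adjoint on the predual $\B_{\infty,0}(\D)$, coincides as a concrete operator with the weighted composition operator $T_{-t}$. This is exactly the content of the change-of-variables computation preceding the theorem statement, and once it is in hand no estimate (isometry, strong continuity, compactness of the resolvent) needs to be re-derived: all are inherited wholesale from Section 2. I would therefore present the proof as a brief verification that the hypotheses of the Section 2 results hold under $(c,k)\mapsto(-c,-k)$, followed by the six transported conclusions.
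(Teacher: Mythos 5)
Your proposal is correct and follows exactly the paper's own route: the paper proves this theorem by the same observation that $T^*_t$ is the Section~2 group with $(c,k)$ replaced by $(-c,-k)$, and then cites Proposition~\ref{propresolvent} and Corollary~\ref{cor}, omitting the details you spell out. Your additional care in identifying the pre-adjoint with the concrete operator $T_{-t}$ (via the change-of-variables pairing computation) and in noting $\G = \G_{-c,-k} = -\G_{c,k}$ matches the computation the paper performs just before the theorem statement.
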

\begin{proof}
The proof follows immediately by replacing $c$ and $k$ with $-c$ and $-k$ respectively in Proposition \ref{propresolvent} and Corollary \ref{cor}. We omit the details.
%Take $c=-\g$ and $k=-1$ and define $\G:=\G_{-\g,-1}$. The result then follows immediately from Theorem \ref{thrmresolvent1}.
\end{proof}
\section{Specific Automorphism of the half-plane}
In this section, we consider a specific automorphism group $(\vj_t)_{t\in \R} \subset \Aut(\uP)$ corresponding to the rotation group given by
\begin{equation}\label{eq3.1}
 \vj_t(z) =  \frac{z\cos t - \sin t}{z \sin t + \cos t}.
\end{equation}
It can be easily verified that $\vj_t(z) = \psi\circ u_t \circ \psi^{-1}(z)$, where $u_t(z) = e^{-2it}z$. The associated group of weighted composition operators on $\Hol(\D)$ is given by $S_{\vj_t}$ and by chain rule, it follows that $S_{\vj_t} = S_{\psi^{-1}}S_{u_t}S_{\psi}$, where $S_{\psi^{-1}} = S_{\psi}^{-1}$.\\
Now, for $f \in \B_{\infty,0}(\D)$,
\begin{align*}
S_{u_t} f(z) &= \left(u_t'(z)\right)^{\g}f(u_t(z)) \\
&= e^{-2i\g t} f(e^{-2it}z).
\end{align*}
Apparently, $S_{u_t}$ can be obtained as a special case of the group $(T_t)_{t\geq 0}$ given by equation \eqref{eq2} when $c=-2\g$ and $k=-2$. Let $\G=\G_{-2\g,-2}$ be the infinitesimal generator of the group $S_{u_t}$, then the properties of $\G$ can be summarized by the following proposition;
\begin{prop}\label{prop4.1}
Let $\G$ be the infinitesimal generator of the group of isometries $S_{u_t}$ on $\B_{\infty,0}(\D)$. Then
\begin{enumerate}
  \item $\G f(z) = i\left( -2\g f(z) - 2z f'(z) \right)$ for every $f\in \B_{\infty,0}(\D)$, with domain\\
  $ \dom(\G) = \left\{ f\in \B_{\infty,0}(\D): f' \in \B_{\infty,0}(\D) \right\}$.
  \item $\s(\G) = \s_p(\G) = \{ -2(\g+n)i: n\in \Z_{+} \}$, and for each $n\geq0$,\\
  $\ker\left(-2(\g+n)i - \G\right) = \mbox{span}(z^n)$
  \item If $\mu\in \rho(\G)$, then $\ran(M^m_z)$ is $R(\mu,\G)$-invariant for every $m\in\Z_{+}$, $m > \Im\left( -(\mu+2\g i)/2 \right)$. Moreover, if $h\in \ran(M^m_z)$, then
      \[R(\mu,\G)h(z) = -\frac{i}{2} z^{(\frac{\mu+2i\g}{2})i}\int_0^z \w^{-(\frac{\mu-2i\g}{2})i-1}h(\w)\,d\w:= R_{\mu}h(z). \]
  %\item For $1<p<\infty$, if $\mu \in \rho(\G)$, then $R_{\mu}^* = -R_{-\bar\mu}$.
\end{enumerate}
\end{prop}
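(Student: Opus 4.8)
The plan is to read off Proposition~\ref{prop4.1} as the specialization of the Section~2 results to the parameters $c=-2\g$, $k=-2$. The first step is to make this identification precise: since $u_t(z)=e^{-2it}z$ has $u_t'(z)=e^{-2it}$, we get $\left(u_t'(z)\right)^{\g}=e^{-2i\g t}$ and hence $S_{u_t}f(z)=e^{-2i\g t}f(e^{-2it}z)$. Comparing with \eqref{eq2}, this is exactly $T_t$ with $c=-2\g$ and $k=-2$, so that $\G=\G_{-2\g,-2}$. Every assertion then follows by substituting these values into Proposition~\ref{gen1}, Proposition~\ref{propresolvent} and Corollary~\ref{cor}; it remains only to check that the substitutions reproduce the stated formulas.

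For (1), Proposition~\ref{gen1}(2) gives $\G_{c,k}f(z)=i\left(cf(z)+kzf'(z)\right)$, and putting $c=-2\g$, $k=-2$ immediately yields $\G f(z)=i\left(-2\g f(z)-2zf'(z)\right)$. The one genuine point is the restatement of the domain: Proposition~\ref{gen1} delivers $\dom(\G)=\{f\in\B_{\infty,0}(\D):zf'\in\B_{\infty,0}(\D)\}$, and I would show this equals $\{f\in\B_{\infty,0}(\D):f'\in\B_{\infty,0}(\D)\}$. Indeed, if $f'\in\B_{\infty,0}(\D)$ then $zf'=M_zf'\in\B_{\infty,0}(\D)$ by Proposition~\ref{multprop}(2); conversely, if $g:=zf'\in\B_{\infty,0}(\D)$ then $g(0)=0$ and $Qg=f'$, so $f'=Qg\in\B_{\infty,0}(\D)$ by the boundedness of $Q$ from Proposition~\ref{multprop}(3). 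Hence the two descriptions of $\dom(\G)$ agree.

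For (2), Corollary~\ref{cor}(1) gives $\s(\G_{c,k})=\s_p(\G_{c,k})=\{i(c+kn):n\in\Z_+\}$ with $\ker\left(i(c+kn)-\G_{c,k}\right)=\Span(z^n)$; the substitution $i(c+kn)=i(-2\g-2n)=-2(\g+n)i$ gives the claimed spectrum, and the eigenfunctions are unchanged. For (3), I would specialize Corollary~\ref{cor}(2): writing $\la=\tfrac{\mu-ic}{k}$ for the effective spectral parameter of $\G_{0,1}$, the choice $c=-2\g$, $k=-2$ gives $\la=-\tfrac{\mu+2i\g}{2}$, so the invariance condition $m>\Im(\la)$ becomes $m>\Im\!\left(-(\mu+2\g i)/2\right)$ as stated. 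Since $R(\mu,\G)=\tfrac1{k}R(\la,\G_{0,1})$ by Proposition~\ref{propresolvent}, inserting $\tfrac{i}{k}=-\tfrac{i}{2}$ into the integral representation \eqref{resolvent2} and simplifying the resulting exponents $i\la-1$ and $-i\la$ yields the displayed kernel $R_\mu h(z)$.

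I do not expect a serious obstacle here, since the statement is a corollary of the general theory already developed. The only care needed is the bookkeeping of the factors and signs introduced by $c=-2\g$, $k=-2$ when simplifying the exponents of $z$ and $\w$ in the resolvent kernel, and the short equivalence argument for the two forms of $\dom(\G)$ via the operators $M_z$ and $Q$ of Proposition~\ref{multprop}.
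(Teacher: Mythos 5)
Your proposal is correct and is essentially the paper's own proof: the paper simply says to take $c=-2\g$, $k=-2$ in Proposition~\ref{gen1} and Corollary~\ref{cor}, which is exactly your substitution. Your additional verification that $\{f\in\B_{\infty,0}(\D): zf'\in\B_{\infty,0}(\D)\}=\{f\in\B_{\infty,0}(\D): f'\in\B_{\infty,0}(\D)\}$ via $M_z$ and $Q$ from Proposition~\ref{multprop} is a worthwhile detail the paper leaves implicit (and your derivation of the resolvent kernel in (3) in fact yields the internally consistent exponent $\w^{-(\frac{\mu+2i\g}{2})i-1}$, indicating a sign typo in the paper's stated formula).
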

\begin{proof}
Take $c=-2\g$ and $k=-2$ in Proposition \ref{gen1} and Corollary \ref{cor}. The proof follows immediately.
\end{proof}
Now, using the similarity theory of semigroups, we detail the properties of the group of weighted composition operators associated with the automorphism group $(\vj_t)_{t\geq 0}$ given by \eqref{eq3.1} in the following theorem;

\begin{thm}\label{propex3}
Let $\vj_t \in \Aut(\uP)$ be given by $\vj_t(z) = \frac{z\cos t - \sin t}{z \sin t + \cos t}$, for all $t\in \R,\,z\in\uP,$ and let $S_{\vj_t}f(z): = (\vj'_t)^\g f(\vj_t(z))$ be the corresponding group of isometries on $\B_{\infty,0}(\D)$. Then
\begin{enumerate}
  \item The infinitesimal generator $\Delta$ of the group $S_{\vj_t}$ on $\B_{\infty,0}(\D)$ is given by
  \[\Delta (h(z)) = -2\g z h(z) - (1+z^2) h'(z),\] with domain $\dom(\Delta)=\{h\in \B_{\infty,0}(\D): 2\g (\w+i)h+(\w+i)^2h'\in \B_{\infty,0}(\D)\}.$
  \item $ \s_p(\Delta) = \s(\Delta) = \{-2(\g+n)i: n\in \mathbb{Z}_{+}\}$, and for each $n\ge0$, $\ker (-2(\g+n)i-\Delta)=\mbox{span}(S_\psi^{-1} z^n)$.
  \item If $\mu\in\rho(\Delta)$ and if $m\in\Z_+$ is such that $m > \Im(-\mu/2-i\g)$. Then, if $h\in \ran(M_z^m)$, we have
 \begin{multline} R(\mu,\Delta)h(z)= (z-i)^{\frac{\mu+2i\g}{2}i} (z+i)^{-(\frac{\mu+2i\g}{2}i+2\g)}\int_0^z (\w-i)^{-(\frac{\mu+2i\g}{2})i-1} \\ (\w+i)^{\frac{\mu+2i\g}{2}i+2\g-1}h(\w)\,d\w.\end{multline}
 \item $R(\mu,\Delta)$ is compact on $\B_{\infty,0}(\D)$.
 \item $\s(R(\mu,\Delta)) = \s_p(R(\mu,\Delta)) = \left\{ w\in \C: \left|w-\tfrac1{2\Re(\mu)}\right|= \tfrac1{|\Re(\mu)|} \right\}$. Morover,
 \[r(R(\mu,\Delta)) = \|R(\mu,\Delta)\| = \tfrac1{2\Re(\mu)}. \]
\end{enumerate}
\end{thm}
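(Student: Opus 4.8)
The plan is to build the whole theorem on the conjugacy $S_{\vj_t}=S_{\psi^{-1}}S_{u_t}S_\psi$ recorded above, together with $S_{\psi^{-1}}=S_\psi^{-1}$. Setting $U:=S_{\psi^{-1}}$ gives $S_{\vj_t}=U\,S_{u_t}\,U^{-1}$, so $(S_{\vj_t})_{t\in\R}$ is similar to the group $(S_{u_t})_{t\in\R}$ whose semigroup and spectral data are already catalogued in Proposition \ref{prop4.1} and Corollary \ref{cor}. By the transfer principle recalled in Section 1, the generator is then $\Delta=U\G U^{-1}$ with $\dom(\Delta)=U\dom(\G)$, the spectra satisfy $\s(\Delta)=\s(\G)$ and $\s_p(\Delta)=\s_p(\G)$, the eigenspaces are carried by $U$, and $R(\mu,\Delta)=U\,R(\mu,\G)\,U^{-1}$ for every $\mu\in\rho(\G)=\rho(\Delta)$. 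The structural point to secure first is that $U=S_{\psi^{-1}}$ is a bounded invertible operator onto the little Bloch space; this rests on the M\"obius invariance of the Bloch seminorm under the Cayley transform, and is the step I would argue most carefully.

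For part (1) I would bypass the abstract conjugation and differentiate $S_{\vj_t}h$ directly at $t=0$. Using $\vj_0=\mathrm{id}$, $\vj_t'(z)=(z\sin t+\cos t)^{-2}$, $\partial_t\vj_t\big|_{t=0}=-(1+z^2)$ and $\partial_t\vj_t'\big|_{t=0}=-2z$, the product rule yields $\Delta h(z)=-2\g z\,h(z)-(1+z^2)h'(z)$, as claimed. The domain is read off as $U\dom(\G)$, equivalently as those $h\in\B_{\infty,0}(\D)$ with $\Delta h\in\B_{\infty,0}(\D)$; since multiplication by $z$ preserves $\B_{\infty,0}(\D)$ by Proposition \ref{multprop}, this reduces to the displayed condition after transporting through $S_\psi$. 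Part (2) is then immediate: $\s(\Delta)=\s_p(\Delta)=\{-2(\g+n)i:n\in\Z_+\}$ from Proposition \ref{prop4.1}(2), and $\ker(-2(\g+n)i-\Delta)=U\ker(-2(\g+n)i-\G)=\Span(S_\psi^{-1}z^n)$.

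Part (3) is the only genuinely computational step. Starting from $R(\mu,\Delta)=S_{\psi^{-1}}R_\mu S_\psi$, with $R_\mu$ the explicit integral operator of Proposition \ref{prop4.1}(3), I would insert $\psi^{-1}(z)=\tfrac{z-i}{z+i}$ and $(\psi^{-1})'(z)=\tfrac{2i}{(z+i)^2}$ and carry out the change of variable inside the integral. The monomial weights $z^{(\cdots)i}$, $\w^{(\cdots)}$ of $R_\mu$ turn into the paired factors $(z\mp i)^{(\cdots)}$, $(\w\mp i)^{(\cdots)}$ of the statement once the exponents and the multiplicative constant are matched; the delicate bookkeeping is the consistent choice of branches for the fractional powers on $\D$, where I expect most of the effort to go.

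Finally, parts (4) and (5) transfer directly. Compactness of $R(\mu,\Delta)$ follows from compactness of $R_\mu=R(\mu,\G)$ (Corollary \ref{cor}(3)) because similarity preserves compact operators, and $\s(R(\mu,\Delta))=\s(R(\mu,\G))$ is the circle of Corollary \ref{cor}(4), with the same spectral radius. For the norm I would mirror the closing argument of Theorem \ref{thrmresolvent1}: as $(S_{\vj_t})$ is a group of isometries, the Hille--Yosida estimate bounds $\|R(\mu,\Delta)\|$ above by $r(R(\mu,\Delta))$, while $\|R(\mu,\Delta)\|\ge r(R(\mu,\Delta))$ holds in general, forcing equality. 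The principal obstacle throughout is the first point---showing that $U=S_{\psi^{-1}}$ is a genuine, and for the norm statement ideally isometric, isomorphism between the half-plane and disk little Bloch spaces, since the additive $|h(0)|$ term of the Bloch norm is not manifestly conformally invariant and must be treated with care.
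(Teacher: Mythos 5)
Your proposal is correct and follows essentially the same route as the paper: conjugation $S_{\vj_t}=S_{\psi}^{-1}S_{u_t}S_{\psi}$, the Section~1 transfer principle for generator, spectra, eigenspaces and resolvents, a change-of-variables computation for the explicit resolvent kernel, similarity-invariance of compactness, and the Hille--Yosida bound combined with the spectral radius for the norm. If anything, you are more careful than the paper, which silently assumes that $S_{\psi^{-1}}$ is a bounded invertible (indeed isometric) map between the disc and half-plane little Bloch spaces; the point you flag about the non-invariance of the $|h(0)|$ term is a genuine gap in the paper's own exposition, not in your argument.
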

\begin{proof}
Since $\vj_t(z) = \psi\circ u_t \circ \psi^{-1}(z)$, it follows that $S_{\vj_t} = S_{\psi^{-1}}S_{u_t}S_{\psi} = S_{\psi}^{-1}S_{u_t}S_{\psi}$, where $S_{\psi}$ is invertible. Let $\Delta$ be the generator of $S_{\vj_t}$ and $\G:=\G_{-2\g,-2}$ be the generator of $S_{u_t}$. Then using similarity theory as presented in section 1 of this paper, we have that:
\begin{itemize}
  \item [(a)] $\Delta = S_g \G S_g^{-1} \mbox{ with domain } \dom(\Delta) = S_g \dom(\G)$
  \item [(b)] $\s(\Delta) = \s(\G)$ and $\s_p(\Delta) = \s_p(\G)$
  \item [(c)] If $\mu \in \rho(\Delta)$, then $R(\mu,\Delta) = S_{\psi}^{-1} R(\mu,\G) S_{\psi}$.
\end{itemize}
With relations (a)-(c) above, and using Proposition \ref{prop4.1}, a direct computation yields assertions 1-3. We omit the details and instead refer to \cite[Theorem 4.4]{Bon} for a similar computation. Assertion 4 follows from the compactness of $R(\mu,\G)$, while assertion 5 is immediate from Corollary \ref{cor}(4) as well as the Hille - Yosida theorem.
\end{proof}

%
%
%%% ------------------------------------------------------------------------

% ------------------------------------------------------------------------
\end{document}